\newlength{\defbaselineskip}
\newcommand{\setlinespacing}[1]%
           {\setlength{\baselineskip}{#1 \defbaselineskip}}
\theoremstyle{plain}
\newtheorem{thm}{Theorem}[section]
\newtheorem{cor}[thm]{Corollary}
\newtheorem{lem}[thm]{Lemma}
\theoremstyle{definition}
\newtheorem{rem}[thm]{Remark}
\newtheorem{note}[thm]{Note}
\numberwithin{equation}{section}
\begin{document}

\newcommand{\ol }{\overline}
\newcommand{\ul }{\underline }
\newcommand{\ra }{\rightarrow }
\newcommand{\lra }{\longrightarrow }
\newcommand{\ga }{\gamma }
\newcommand{\st }{\stackrel }
\newcommand{\scr }{\scriptsize }

\title{\Large\textbf{A Theorem of Van Kampen Type for Pseudo Peano Continuum Spaces}}
\author{\textbf{Hanieh Mirebrahimi\footnote{Correspondence: h$_{-}$mirebrahimi@um.ac.ir} and Behrooz Mashayekhy\footnote{  This research was supported by a grant from Ferdowsi  University of Mashhad; (No. MP87151MSH). bmashf@um.ac.ir}}\\
Department of Pure Mathematics,\\ Center of Excellence in Analysis on Algebraic Structures,\\
Ferdowsi University of Mashhad,\\
P. O. Box 1159-91775, Mashhad, Iran.}
\date{ }
\maketitle
\begin{abstract}
G. Conner and K. Eda (Topology and its Applications, 146, (2005),
317-328.) introduced a new construction of spaces from groups.
They remarked that the construction is not categorical. In this
paper, based on the work of Conner and Eda, we construct two new
categories for which the functor of the fundamental group $\pi$
has a right adjoint and consequently is right exact and preseves
direct limits. Also, we study the behavior of the functor $\pi$ on
quotient spaces and give a new version of Van Kampen theorem for
join of spaces in the new categories.
\end{abstract}
\emph{2010 Mathematics Subject Classification}: 55Q20; 55Q70; 55U40; 55P65; 57M07.\\
\emph{Key words}: Fundamental group; Peano continuum; Wild space; Adjoint pair.
\section{Introduction and Motivation}
K. Eda [2] proved that if $X$ and $Y$ are one-dimensional, locally
path connected, path connected, metric spaces which are not
semilocally simply connected at any point and have isomorphic
fundamental groups, then $X$ and $Y$ are homeomorphic.

G. Conner and K. Eda [1] based on the above fact proposed a new
construction of a space from a group and showed that this
construction arises from the fundamental group of a space of the
above type and it is homeomorphic to the topological space itself.
They also remarked that the construction of topological spaces
from groups is not categorical [1, Remark 2.8 (2)]. They called a
point $x\in X$ a \emph{wild point}, if $X$ is not semilocally
simply connected at $x$ and denoted $X^{w}$ for the subspace
consisting of all wild points of $X$. They also called a space
$X$ \emph{wild} if $X=X^{w}$.

In this paper, we call, for abbreviation, a one-dimensional,
locally path connected, path connected, metric space as a \emph{
pseudo Peano continuum } and study some interesting properties of
these spaces in section three in order to find some functorial
properties related to the Conner-Eda constructions. In section
four, we construct two new categories ${\mathbf{sqTop^w}}$ and
${\pi(\mathbf{Top^w})}$ for which the functor $\bar{\pi}$,
introduced by the fundamental group $\pi$, has a right adjoint and
consequently is right exact and preserves direct limits in the new
categories. These properties enable us to present in section five
a new version of Van Kampen theorem for joins of wild pseudo Peano
continuum spaces. Finally, in section six, using the right
exactness of $\bar{\pi}$, we study the behavior of the
fundamental group on quotient spaces of wild
pseudo Peano continua.
\section{Definitions and Preliminaries}
We are supposed that the reader is familiar with basic concepts
and properties in group theory, topology, algebraic topology and
category theory. In this section, we recall some essential facts
from [1,2,3]. Firstly, given two categories $\mathcal{C}$ and
$\mathcal{D}$, a pair of functors $(T_1,T_2)$, where
$T_1:\mathcal{C}\rightarrow \mathcal{D}$ and
$T_2:\mathcal{D}\rightarrow \mathcal{C}$, is said to be
\emph{adjoint} if there exists a natural equivalence
\[{\mathrm Hom}_{\mathcal{D}}(T_1 X,Y)\simeq {\mathrm Hom}_{\mathcal{C}}(X,T_2 Y),\]
for all $X\in \mathcal{C}$ and $Y\in \mathcal{D}$. The functor
${T_2}$ is called the \emph{right-adjoint functor} to $T_1$
and the pair ($T_1,T_2$) is called an \emph{adjoint pair}.

It is well-known that every functor which has right-adjoint
preserves direct limits [7], that is for any direct systems $\{X_i;
\lambda_i^j,I\}$ indexed by a partially ordered set I, in the
category $\mathcal{C}$, we have
\[T_1(\lim_{\longrightarrow} X_i)=\lim_{\longrightarrow} T_1(X_i).\]

In addition, we recall two important topological structures, join
and weak join [4,7] of a family of pointed topological spaces
$\{(X_i,x_i)\}$ indexed by a partially ordered set I. The \emph{join}
space $(X,x)=\bigvee_{i\in I}(X_i,x_i)$ of the $X_i$ at the
origin $x$ is the quotient of the disjoint union of the $X_i$ by
the relation which identifies all the copies $x_i$ of $x$. The
\emph{weak join} of these spaces, denoted by
$(\tilde{X},x)=\tilde{\bigvee}_{i\in I}(X_i,x_i)$, is defined as a
different topology on the same underlying set, that of $\tilde{X}$
is coarser. The topology of $\tilde{X}$ is exactly the same
everywhere except at the distinguished point $x$. A neighborhood
of $x$ in $X$ is any set of the form $\bigvee_{i\in I}U_i$, where
$U_i$ is a neighborhood of $x_i$ in $X_i$. A neighborhood of $x$
in $\tilde{X}$ has the same form but must also satisfy $U_i=X_i$
for almost all $i$.

Note that the above constructions, join and weak join spaces,
 can be considered as direct limit and inverse limit
of special systems in the category of pointed topological spaces,
respectively, [8]. For the structure of direct limit and inverse
limit in the category ${Top}_*$, we only recall that direct limit
is a particular quotient of join space and inverse limit is a
special subspace of product space. For further details we refer
the reader to [8].

We also recall some topological definitions. A topological space
$X$ has \emph{topological dimension} $m$, if every covering
$\cal U$ of $X$ has a refinement $\cal U'$ in which every point
of $X$ occurs in at most $m+1$ sets in $\cal U'$, and $m$ is the
smallest such integer [5].

A locally path connected, path connected, compact metric space is
called \emph{Peano continuum} [1]. A space $X$ is said to be
\emph{semilocally simply connected} (\emph{semilocally
$1$-connected}) [6] at $x\in X$ if there exists an open
neighborhood $U_x$ of $x$ so that the inclusion map
$i:U_x\hookrightarrow X$ induces the trivial homomorphism
$i_*:\pi(U_x,x)\hookrightarrow \pi(X,x)$, otherwise the point
$x\in X$ is called a wild point at $X$.

We continue this section by pointing out some algebraic and
topological concepts that have been studied in [1,3].

Let $\{G_i\ |\ i\in X\}$ be a family of groups, $\prod^{*}_{i\in X}G_i$
be the free product of the $G_i$ for $X\subset I$, and
$p_{XY}:\prod^{*}_{i\in Y}G_i\rightarrow \prod^{*}_{i\in X}G_i$ be the canonical
homomorphism for $X\subset Y\subset I$. Then the unrestricted free
product of the family is the inverse limit
$\displaystyle{\lim_{\leftarrow}(\prod^{*}_{i\in X}G_i,p_{XY}:X\subset
Y\sqsubset I)}$, where $Y\sqsubset I$ means that $Y$ is the finite
subset of $I$ [3]. The free products of groups are defined by
using words of finite length, and the infinitary version of the
free products will be defined by using words of infinite length
instead of finite one. Indeed our interest is concentrated on
\emph{free $\sigma$-product } of $G_i$ $(i\in I)$, denoted by
$X_{i\in I}^{\sigma}G_i$, which is defined by using words of
countable length and is a subgroup of the unrestricted free
product [3].

Note that one of the most interesting theorems which we use in
this paper asserts that the fundamental group of a weak join of a
family of some topological spaces is isomorphic to the free
$\sigma$-product of their fundamental groups [3, Theorem A.1]. For
instance, we point out the well-known \emph{Hawaiian earring
space}, is a weak join of countably many circles, whose
fundamental group is isomorphic to the free $\sigma$-product
$X^{\sigma}_{n<\omega}{\mathbb{Z}_n}$, where ${\mathbb Z}_n$,
for $n<\omega$, is a copy of the infinite cyclic group ${\mathbb
Z}$.

Now we mention one of the most essential structures, introduced by
 Conner and Eda in [1] and our results are mainly based on it. For
an arbitrary group $G$, ${\cal H}_G$ is defined to be the set of
all subgroups of $G$ which are homomorphic images of
$X^{\sigma}_{n<\omega}{\mathbb{Z}_n}$. A finite subset $F$ of
${\cal H}_G$ is said to be \emph{compatible} if there exists
$H\in {\cal H}_G$ such that $\cup F\subseteq H$. A subfamily $C$
of ${\cal H}_G$ is said to be \emph{compatible} if any finite
subset of $C$ is compatible. Let ${\cal X}_G$ be the set of all
maximal compatible subfamilies of ${\cal H}_G$ which contain an
uncountable subgroup. For subgroups $H$ and $H'$ of $G$, we denote
$H\preceq H'$ if there exists $F\sqsubset G$ such that $H\leq
\langle H'\cup F\rangle$.

Also, a topology on the set ${\cal X}_G$ is introduced so that for
any $Y\subseteq {\cal X}_G$, $Y$ is closed if contains all its
limit points. Note that a point $x$ is called to be a limit point
of $Y$ if there exists a sequence $(x_n:n<\omega)$ of elements of
$Y$ satisfying the condition that for given uncountable $H_n\in
x_n (n<\omega)$ there exists $H'_n\in x_n$ such that:

- $H_n\preceq H'_n$;

- for arbitrary $a_n\in H'_n (n<\omega)$ there exists
$h:X^{\sigma}_{n<\omega}{\mathbb{Z}}_n\rightarrow G$ such
that $h(\delta_n)=a_n$ for every $n<\omega$ and
${\mathrm{Im}}(h)\in x$, where ${\mathbb Z}_n$ is a copy of the
infinite cyclic group ${\mathbb Z}$ and $\delta_n$ it's generator
for any $n<\omega$.

Note that one of the goal, studied by Conner and Eda [1], is to
find a structure for the space ${\cal X}_G$ with respect to the
structure of $G$. For instance, we present some of these
interesting results which are the most useful in our main results
in this paper. In each point followed up, we refer to [1] for the
proof and further details.
\begin{thm} ([1, Theorem 2.2]).
Let $A$ be an abelian group. Then ${\cal X}_A$ is an empty or
one-point space. Specially ${\cal X}_{\mathbb Z}=\emptyset$.
\end{thm}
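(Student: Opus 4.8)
The plan is to unpack the definition of $\mathcal{X}_A$ and show that when $A$ is abelian, every maximal compatible subfamily of $\mathcal{H}_A$ containing an uncountable subgroup must be the \emph{same} one, so $\mathcal{X}_A$ has at most one point. First I would observe that $\mathcal{H}_A$ consists of subgroups of $A$ that are homomorphic images of $X^{\sigma}_{n<\omega}\mathbb{Z}_n$; since $A$ is abelian, every such image is abelian, so in fact $\mathcal{H}_A$ is contained in the set of abelian subgroups of $A$ that arise this way. The key structural point is that the abelianization of $X^{\sigma}_{n<\omega}\mathbb{Z}_n$ is well understood (it is the so-called Specker-type group, and its homomorphic images into an abelian group $A$ form a well-behaved family), and crucially any two members $H,H'\in\mathcal{H}_A$ generate a subgroup $\langle H\cup H'\rangle$ which is again abelian; I would argue this generated subgroup is again in $\mathcal{H}_A$, i.e. is again a homomorphic image of $X^{\sigma}_{n<\omega}\mathbb{Z}_n$. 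Granting that, \emph{any} finite subset of $\mathcal{H}_A$ is automatically compatible (take $H$ to be the subgroup generated by the finitely many members), so $\mathcal{H}_A$ itself is the unique maximal compatible subfamily.

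With that in hand the conclusion is immediate: if $\mathcal{H}_A$ contains an uncountable subgroup then $\mathcal{X}_A=\{\mathcal{H}_A\}$ is a one-point space; if every member of $\mathcal{H}_A$ is countable, then there is no maximal compatible subfamily containing an uncountable subgroup, so $\mathcal{X}_A=\emptyset$. For the special case $A=\mathbb{Z}$: every subgroup of $\mathbb{Z}$ is either trivial or infinite cyclic, hence countable, so no uncountable subgroup occurs in $\mathcal{H}_{\mathbb{Z}}$, giving $\mathcal{X}_{\mathbb{Z}}=\emptyset$. (One can even note that a homomorphic image of $X^{\sigma}_{n<\omega}\mathbb{Z}_n$ inside $\mathbb{Z}$ is finitely generated, being a subgroup of $\mathbb{Z}$, which is consistent with this.)

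The main obstacle I anticipate is the closure claim: that the subgroup $\langle H\cup H'\rangle$ generated by two members of $\mathcal{H}_A$ is again a homomorphic image of $X^{\sigma}_{n<\omega}\mathbb{Z}_n$. In the non-abelian setting this can fail, but abelianness should rescue it: given surjections $f\colon X^{\sigma}_{n<\omega}\mathbb{Z}_n\twoheadrightarrow H$ and $g\colon X^{\sigma}_{n<\omega}\mathbb{Z}_n\twoheadrightarrow H'$, one wants to combine them into a single homomorphism from $X^{\sigma}_{n<\omega}\mathbb{Z}_n$ onto $\langle H\cup H'\rangle$. The idea is to use a "shuffling" of two countable index sets into one — the free $\sigma$-product over $\omega$ is isomorphic to the free $\sigma$-product over $\omega\sqcup\omega$ — and then, because the target is abelian, the universal property of the free $\sigma$-product lets us define a map on the combined word group whose image is exactly the subgroup generated by $\mathrm{Im}(f)$ and $\mathrm{Im}(g)$. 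I would want to check carefully that infinite-length words are handled correctly under this identification (this is where the countability hypothesis in the definition of $X^{\sigma}$ is essential), and that surjectivity onto the generated subgroup — not merely onto $H\cdot H'$ as a set — holds; since the group is abelian, $\langle H\cup H'\rangle=H+H'$ as sets, which makes this routine once the map is set up.
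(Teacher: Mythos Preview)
The paper does not prove this statement at all: Theorem~2.1 is quoted verbatim from Conner--Eda~[1, Theorem~2.2], and the surrounding text explicitly says ``we refer to [1] for the proof and further details.'' So there is no in-paper argument to compare your proposal against.

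On its own merits your strategy is the natural one and is essentially what Conner--Eda do: reduce everything to the closure claim that $H+H'\in\mathcal{H}_A$ whenever $H,H'\in\mathcal{H}_A$, so that $\mathcal{H}_A$ is itself compatible and hence the unique maximal compatible family. One point of imprecision worth tightening: the free $\sigma$-product does \emph{not} enjoy the naive universal property you invoke (a homomorphism out of $X^{\sigma}_{n<\omega}\mathbb{Z}_n$ is not determined by the images of the $\delta_n$), so ``combine $f$ and $g$ via the universal property'' is not a legal move as stated. What does work is this: identify $X^{\sigma}_{n<\omega}\mathbb{Z}_n$ with $X^{\sigma}_{n\in\omega\sqcup\omega}\mathbb{Z}_n$, use the two canonical \emph{retractions} $p_1,p_2$ onto the sub-$\sigma$-products indexed by each copy of $\omega$ (these exist by Eda's theory, by sending letters from the other half to $e$), and set $h(w)=f(p_1(w))+g(p_2(w))$. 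This is a homomorphism precisely because $A$ is abelian, and its image is visibly $H+H'$. With that fix, your argument goes through; the $\mathbb{Z}$ case is immediate as you note.
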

\begin{thm} ([1, Theorem 5.1]).
Let $X$ be a locally path connected, path connected,
one-dimensional metric space and $G$ be the fundamental group
$\pi(X)$. Then ${\cal X}_G$ is homeomorphic to $X^w$.
Consequently, if $X$ is wild, then ${\cal X}_G$ is homeomorphic to
$X$ itself.
\end{thm}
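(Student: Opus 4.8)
The plan is to produce an explicit bijection $\Phi\colon X^{w}\to\mathcal{X}_{G}$ and then verify it is a homeomorphism; granting this, the last sentence is automatic, since $X=X^{w}$ when $X$ is wild. To define $\Phi$, fix $x\in X^{w}$ and a path $\gamma$ from the basepoint of $X$ to $x$. As $X$ is one-dimensional, locally path connected and metric, $x$ has a decreasing neighbourhood basis $\{U_{n}\}_{n<\omega}$ of path connected open sets; for all sufficiently large $n$ the theory of one-dimensional spaces [2,3] shows that the image $H_{n}\le G$ of the local group $\pi(U_{n},x)$ (via $i_{*}$ and a change of basepoint along $\gamma$) is a homomorphic image of $X^{\sigma}_{n<\omega}\mathbb{Z}_{n}$, i.e.\ $H_{n}\in\mathcal{H}_{G}$. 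The $H_{n}$ are nested, and since $x$ is wild each $H_{n}$ is nontrivial, hence --- again by one-dimensionality, [2,3] --- uncountable; a countable such image would make $x$ semilocally simply connected. One checks that the nested compatible family $\{H_{n}\}$ lies in a \emph{unique} maximal compatible subfamily of $\mathcal{H}_{G}$, namely the family of all subgroups ``concentrated at $x$'', and sets $\Phi(x)$ equal to it; as it contains the uncountable groups $H_{n}$, indeed $\Phi(x)\in\mathcal{X}_{G}$. (For abelian $G$ this is consistent with Theorem 2.1, e.g.\ $X=S^{1}$, where $X^{w}=\emptyset=\mathcal{X}_{\mathbb{Z}}$.)

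Injectivity is the mildest step: if $x\ne y$ are wild, choose disjoint path connected neighbourhoods $U\ni x$ and $V\ni y$; by the uniqueness of reduced $\sigma$-word representatives of homotopy classes in one-dimensional spaces [2,3], a class supported near $x$ cannot be turned, by deleting finitely many letters, into one supported near $y$, so no uncountable member of $\Phi(x)$ is $\preceq$-below a member of $\Phi(y)$, whence $\Phi(x)\ne\Phi(y)$. For surjectivity, take $\mathbf{x}\in\mathcal{X}_{G}$ and an uncountable $H\in\mathbf{x}$; writing $H$ as a homomorphic image of $X^{\sigma}_{n<\omega}\mathbb{Z}_{n}=\pi(\mathbb{H})$ (the Hawaiian earring, [3, Theorem A.1]) and realizing the generators by loops in $X$, one obtains a continuous map $f\colon\mathbb{H}\to X$ realizing $H$ (i.e.\ $f_{*}(\pi(\mathbb{H}))$ equals $H$ after a change of basepoint). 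The point $x_{0}:=f(\ast)$ must be wild: continuity of $f$ at the wedge point forces every neighbourhood of $x_{0}$ to contain all but finitely many of the representing loops, so $H$ is concentrated at $x_{0}$, and since $H$ is uncountable $x_{0}$ cannot be semilocally simply connected. Comparing $H$ with the local groups $H_{n}$ at $x_{0}$ then shows that $\mathbf{x}$, being a maximal compatible family that contains the concentrated-at-$x_{0}$ group $H$, coincides with the family of all subgroups concentrated at $x_{0}$; that is, $\mathbf{x}=\Phi(x_{0})$.

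It remains to match the topologies. Since the topology on $\mathcal{X}_{G}$ is prescribed through its limit points it is sequential, while $X^{w}$ is metrizable as a subspace of $X$, so it is enough to show that $x_{n}\to x$ in $X^{w}$ if and only if $\Phi(x)$ is a limit point of $(\Phi(x_{n}))_{n}$ in $\mathcal{X}_{G}$. This is exactly what the two bulleted conditions defining a limit point were built to express: replacing an uncountable $H_{n}\in\Phi(x_{n})$ by a genuine local subgroup $H'_{n}\in\Phi(x_{n})$ with $H_{n}\preceq H'_{n}$, and then, for prescribed $a_{n}\in H'_{n}$, constructing $h\colon X^{\sigma}_{n<\omega}\mathbb{Z}_{n}\to G$ with $h(\delta_{n})=a_{n}$ and $\mathrm{Im}(h)\in\Phi(x)$, says precisely that loops sitting near the points $x_{n}$ can be assembled into a single Hawaiian-earring map whose image is concentrated at $x$ --- which is possible iff $x_{n}\to x$; conversely, if $x_{n}\not\to x$ then no such $h$ has image concentrated at $x$, so $\mathrm{Im}(h)\notin\Phi(x)$. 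The real obstacle is this surjectivity step together with the last equivalence: both rest on Eda's reduced $\sigma$-word machinery for one-dimensional spaces, which is what decides when two ``locally supported'' subgroups of $G$ coincide or are $\preceq$-comparable, and is therefore what lets one recover the wild point from purely algebraic data and identify the combinatorial topology of $\mathcal{X}_{G}$ with the metric topology of $X^{w}$.
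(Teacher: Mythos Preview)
This theorem is not proved in the paper: it is quoted verbatim from Conner and Eda and cited as [1, Theorem~5.1], with no argument supplied here. Consequently there is no ``paper's own proof'' to compare your proposal against; the authors simply invoke the result as a black box in order to build the categories $\mathbf{sqTop^w}$ and $\pi(\mathbf{Top^w})$ and to show that $\bar{\pi}$ and $\mathcal{X}_{-}$ are mutually inverse.

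That said, your sketch is a reasonable outline of how the original Conner--Eda proof actually goes: one assigns to each wild point the maximal compatible family generated by the images of its local fundamental groups, and then checks bijectivity and matches the sequential topology of $\mathcal{X}_{G}$ with the metric topology of $X^{w}$. Two places in your outline deserve caution if you ever write it out in full. First, the surjectivity step implicitly uses that every homomorphism $X^{\sigma}_{n<\omega}\mathbb{Z}_{n}\to G=\pi(X)$ is \emph{spatial} up to conjugation, i.e.\ induced by a continuous map $\mathbb{H}\to X$; this is a genuine theorem of Eda for one-dimensional spaces and is not a formality. Second, the assertion that the family of ``subgroups concentrated at $x$'' is the \emph{unique} maximal compatible family containing the $H_{n}$, and that any uncountable $H\in\mathcal{H}_{G}$ is concentrated at a single well-defined point, requires the reduced-$\sigma$-word machinery in a more serious way than your parenthetical suggests; this is where the one-dimensionality hypothesis does real work.
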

\begin{thm} ([1, Corollary 5.2]).
Let $X_n$ be a locally path connected, path connected,
one-dimensional metric spaces such that $X^w_n\neq \emptyset$,
for all $n<\omega$, and $G$ be the fundamental group
$\pi(\prod_{n<\omega}X_n)\simeq \prod_{n<\omega}\pi(X_n)$. Then
${\cal X}_G$ is homeomorphic to $\prod_{n<\omega}X^{\omega}_n$.
Consequently, if $X_n$ is wild, for every $n< \omega$, then ${\cal
X}_G$ is homeomorphic to $\prod_{n<\omega}X_n$.
\end{thm}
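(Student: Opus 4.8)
The plan is to move the whole question to the group side and reduce the countable product to a factor-by-factor application of Theorem 2.2. Write $G_n=\pi(X_n)$ and $G=\pi(\prod_{n<\omega}X_n)$. First, $\pi$ commutes with arbitrary products — a loop in $\prod_{n<\omega}X_n$, and a homotopy between such loops, is nothing but a sequence of loops, resp. homotopies, one in each coordinate — so the isomorphism $G\cong\prod_{n<\omega}G_n$ in the statement is automatic. Next, each $X_n$ is a locally path connected, path connected, one-dimensional metric space with $X_n^w\neq\emptyset$, so Theorem 2.2 applies to $X_n$ individually and gives a homeomorphism ${\cal X}_{G_n}\approx X_n^w$; in particular each ${\cal X}_{G_n}$ is non-empty. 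Hence it suffices to prove the purely group-theoretic statement: if ${\cal X}_{G_n}\neq\emptyset$ for every $n$, then
\begin{equation*}
{\cal X}_{\prod_{n<\omega}G_n}\ \approx\ \prod_{n<\omega}{\cal X}_{G_n}.\tag{$\ast$}
\end{equation*}
Indeed, chaining $(\ast)$ with the homeomorphisms ${\cal X}_{G_n}\approx X_n^w$ gives ${\cal X}_G\approx\prod_{n<\omega}X_n^w$, the first assertion; and if every $X_n$ is wild then $X_n^w=X_n$, which is the ``consequently'' part. (The hypothesis in $(\ast)$ is genuinely needed: by Theorem 2.1 every ${\cal X}_{\mathbb Z}$ is empty, whereas ${\cal X}_{\mathbb Z^{\omega}}$ is non-empty since $\mathbb Z^{\omega}$ is itself an uncountable homomorphic image of $X^{\sigma}_{k<\omega}\mathbb Z_k$ — which is exactly why the theorem assumes $X_n^w\neq\emptyset$ for every $n$.)

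To prove $(\ast)$ I would construct mutually inverse maps between the two sides, using the projections $p_m\colon\prod_{n<\omega}G_n\to G_m$ and the coordinate inclusions $\iota_m\colon G_m\to\prod_{n<\omega}G_n$. If $H\in{\cal H}_{\prod_nG_n}$ is the image of a homomorphism $\varphi\colon X^{\sigma}_{k<\omega}\mathbb Z_k\to\prod_nG_n$, then $p_m(H)$ is the image of $p_m\circ\varphi$ and so lies in ${\cal H}_{G_m}$; and applying $p_m$ to a subgroup witnessing compatibility of a finite subfamily shows that $\{p_m(H):H\in C\}$ is compatible in ${\cal H}_{G_m}$ whenever $C\subseteq{\cal H}_{\prod_nG_n}$ is. So to $x\in{\cal X}_{\prod_nG_n}$ one assigns, in coordinate $m$, a maximal compatible family extending $\{p_m(H):H\in x\}$; two things must be checked here — that this extension is forced (canonical), and that it contains an uncountable subgroup (for the latter, $x$ contains an uncountable $H$, and should $p_m(H)$ happen to be countable the hypothesis ${\cal X}_{G_m}\neq\emptyset$ supplies an uncountable member compatible with everything present). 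Conversely, given $(x_m)_{m<\omega}\in\prod_{m<\omega}{\cal X}_{G_m}$, each $\iota_m(H)$ with $H\in x_m$ lies in ${\cal H}_{\prod_nG_n}$, and the subfamily generated by $\bigcup_m\{\iota_m(H):H\in x_m\}$ is compatible — here one uses the standard fact that a finite direct product of homomorphic images of $X^{\sigma}_{k<\omega}\mathbb Z_k$ is again such an image, obtained by splitting $\omega$ into finitely many infinite blocks and deleting letters — so it extends to a maximal compatible family, which contains the uncountable $\iota_m(H_m)$ for an uncountable $H_m\in x_m$. Maximality on both sides should force the two assignments to be inverse to one another.

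The main obstacle is the topology: one must verify that the bijection $(\ast)$ is a homeomorphism, where the left side carries the Conner--Eda topology (in which a set is closed precisely when it contains all its limit points) and the right side the product of such topologies. Since the left side is a sequential space by construction and each ${\cal X}_{G_n}$ is metrizable by Theorem 2.2, it suffices to show that the bijection and its inverse take convergent sequences to convergent sequences; concretely, that $(z^{(k)})_{k<\omega}$ converges to $x$ in ${\cal X}_{\prod_nG_n}$ if and only if, for every $m$, the $m$-th coordinates of the $z^{(k)}$ converge, as $k\to\infty$, to the $m$-th coordinate of $x$ in ${\cal X}_{G_m}$. Unwinding the definition of a limit point, the relation $\preceq$ is compatible with the projections $p_m$ (because $p_m\langle H'\cup F\rangle=\langle p_m(H')\cup p_m(F)\rangle$ and $p_m(F)$ is finite), and a homomorphism $h\colon X^{\sigma}_{k<\omega}\mathbb Z_k\to\prod_nG_n$ prescribed on the generators $\delta_k$ is the same datum as a sequence $(p_m\circ h)_m$ of homomorphisms into the $G_m$, by the universal property of the product; hence the clauses ``$H_k\preceq H'_k$'' and ``for all $a_k\in H'_k$ there is $h$ with $h(\delta_k)=a_k$ and ${\mathrm{Im}}(h)\in x$'' decompose coordinatewise. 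The delicate direction is the converse — assembling one witnessing homomorphism into $\prod_nG_n$ out of coordinatewise witnesses — where one again invokes ${\mathrm{Hom}}(X^{\sigma}_{k<\omega}\mathbb Z_k,\prod_nG_n)=\prod_n{\mathrm{Hom}}(X^{\sigma}_{k<\omega}\mathbb Z_k,G_n)$; what requires real care is confirming that the resulting correspondence of convergent sequences is exactly the one defining the product topology on $\prod_{m<\omega}{\cal X}_{G_m}$. Once $(\ast)$ is established as a homeomorphism, the two chains of homeomorphisms displayed above complete the proof.
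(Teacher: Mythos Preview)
This statement is not proved in the paper: Theorem 2.3 is quoted from Conner--Eda [1, Corollary 5.2], and Section 2 explicitly refers the reader to [1] for the proofs of all the results listed there. The closest the present paper comes is the remark at the end of Section 5 (together with the line in Section 6 that ${\cal X}_{0-}$ preserves inverse limits) suggesting that, as a right adjoint, ${\cal X}_{-}$ should take the product $\prod_n G_n$ to the product $\prod_n{\cal X}_{G_n}$; but this is never carried out, and there is an evident obstruction to doing so inside the paper's own framework, since an infinite product $\prod_{n<\omega}X_n$ of nondegenerate spaces is not one-dimensional and hence not in $\mathbf{Top^w}$, so $\prod_n G_n$ is not a priori an object of $\pi(\mathbf{Top^w})$. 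There is thus no proof here to compare your attempt against; your hands-on construction is in any case a quite different route from that one-line categorical gesture.

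Judged on its own terms, your plan has the right shape and you are candid about the soft spots, but two of them are genuine gaps rather than routine checks. First, well-definedness of the forward map: a maximal compatible family in ${\cal H}_{G_m}$ extending $\{p_m(H):H\in x\}$ need not be unique, and you supply no mechanism for singling one out; the sentence ``maximality on both sides should force the two assignments to be inverse'' presupposes exactly the uniqueness that is in question. Second, the fallback ``should $p_m(H)$ happen to be countable the hypothesis ${\cal X}_{G_m}\neq\emptyset$ supplies an uncountable member compatible with everything present'' is unjustified --- an arbitrary point of ${\cal X}_{G_m}$ has no reason to be compatible with the projected family you already have. Closing these gaps needs more than the formal behaviour of the projections $p_m$ and inclusions $\iota_m$; it requires structural input about homomorphisms from $X^{\sigma}_{k<\omega}\mathbb Z_k$ into the specific groups $G_m=\pi(X_m)$, which is what the machinery of [1] supplies.
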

Finally, a group $S$ is said to be \emph{$n$-slender} [1] if and
only if for each homomorphism $h:X_{n<\omega} {\mathbb{Z}}_n
\rightarrow S$, the set $\{n<\omega :\ h(\delta_n)\neq e \}$ is
finite, where ${\mathbb Z}_n$ is a copy of the infinite cyclic
group ${\mathbb Z}$ and $\delta_n$ it's generator for any
$n<\omega$. The class $\mathcal{S}$ consists of all the groups
$G$ such that for any non-trivial element $g\in G$, there exists
an $n$-slender group $S$ and a homomorphism $h:G \rightarrow S$
with $h(g)\neq e$. Of course, in group theory, we can call
$\mathcal{S}$ as the class of all \emph{residually $n$-slender}
groups.
\begin{thm} ([1, Corollary 4.7]).
Let $A$ and $B$ be groups in $\mathcal{S}$. Then ${\cal
X}_{A*B}$ is the topological sum of ${\cal X}_A$ and ${\cal X}_B$.
\end{thm}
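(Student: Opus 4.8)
The plan is to realize the topological sum decomposition by showing that every maximal compatible subfamily of $\mathcal H_{A*B}$ which contains an uncountable subgroup lives, in an essentially unique way, inside $A$ or inside $B$. The inclusions $A,B\hookrightarrow A*B$ induce inclusions $\mathcal H_A,\mathcal H_B\hookrightarrow\mathcal H_{A*B}$, since a homomorphic image of $X^{\sigma}_{n<\omega}\mathbb Z_n$ lying in $A$ also lies in $A*B$; hence each compatible family in $\mathcal H_A$ or $\mathcal H_B$ becomes a compatible family in $\mathcal H_{A*B}$, and each $x\in\mathcal X_A$ becomes a compatible subfamily still possessing an uncountable member. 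Extend it to a maximal one $\hat x$. I would then prove, in order: (i) $\hat x$ is the \emph{unique} maximal compatible extension, so that $x\mapsto\hat x$ is a well-defined map $\mathcal X_A\sqcup\mathcal X_B\to\mathcal X_{A*B}$; (ii) it is injective with disjoint images; (iii) it is surjective; (iv) it is a homeomorphism onto each of its two clopen images. Everything is reduced to one structural lemma about $\mathcal H_{A*B}$, which is where I expect the real difficulty.

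\textbf{Key Lemma.} If $A,B\in\mathcal S$, then every uncountable $H\in\mathcal H_{A*B}$ is contained in a conjugate of $A$ or of $B$, and that conjugate is uniquely determined by $H$. To prove it I would first use that $\mathcal S$ is closed under free products, so $A*B\in\mathcal S$ (this is part of the work in [1, \S4]). Given $h:X^{\sigma}_{n<\omega}\mathbb Z_n\to A*B$, look at the induced action on the Bass--Serre tree $T$ of the splitting $A*B$, whose edge stabilizers are trivial and whose vertex stabilizers are the conjugates of $A$ and of $B$. If $h(X^{\sigma}_{n<\omega}\mathbb Z_n)$ fixes a vertex of $T$ we are done. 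Otherwise the image contains a hyperbolic isometry, and here I would invoke the noncommutative-slenderness / normal-form machinery used in [1]: residual $n$-slenderness of $A*B$ lets one see, in suitable $n$-slender quotients, that almost all of the $h(\delta_n)$ are forced into a single vertex group, and the combinatorics of reduced words in $A*B$ then confines $H$ to one vertex stabilizer (in particular, a non-elliptic image must already be countable). Uniqueness of the conjugate is easier: in a nontrivial free product two distinct vertex stabilizers meet trivially, so an uncountable subgroup cannot sit inside two of them. Making this dichotomy rigorous, and checking that the residual conjugation ambiguity does not obstruct the passage to $\mathcal X$, is the main obstacle.

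Granting the Key Lemma, the rest is bookkeeping. \emph{Uniqueness of the extension}: if $H'\in\mathcal H_{A*B}$ is compatible with $x\in\mathcal X_A$, pick an uncountable $H_0\in x$ and a witness $H''\in\mathcal H_{A*B}$ with $H_0\cup H'\subseteq H''$; then $H''$ is uncountable, so by the Key Lemma $H''$, and hence $H'$, lies in the conjugate of $A$ that contains $H_0$, namely $A$ itself, and transporting back, $H'$ corresponds to a member of $\mathcal H_A$ compatible with $x$, which by maximality of $x$ in $\mathcal X_A$ already belongs to $x$. Thus the collection of members of $\mathcal H_{A*B}$ compatible with $x$ is itself compatible and equals $\hat x$. \emph{Injectivity} is then clear, since $x$ is recovered as $\hat x\cap\mathcal H_A$; \emph{disjointness} of the $\mathcal X_A$- and $\mathcal X_B$-images holds because a single maximal compatible family cannot contain both an uncountable subgroup of $A$ and one of $B$, as their common witness would be uncountable and conjugate into $A$ and into $B$ at once. \emph{Surjectivity}: a point $z\in\mathcal X_{A*B}$ has an uncountable member $H$; by the Key Lemma $H$ is conjugate into, say, $A$, and one checks that the corresponding subfamily of $\mathcal H_A$ is maximal compatible with an uncountable member, so $z$ is the image of that point of $\mathcal X_A$.

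Finally, for the topology I would verify directly from the definition of limit point in $\mathcal X_G$ that the bijection carries closed sets to closed sets both ways, so that each of $\mathcal X_A$ and $\mathcal X_B$ sits as a clopen subset and $\mathcal X_{A*B}$ is their topological sum. Concretely: if a sequence $(z_k)$ in $\mathcal X_{A*B}$ converges to $z$, then for all large $k$ the uncountable members of $z_k$ lie in the same vertex group as those of $z$, because the defining chain of conditions $H_k\preceq H'_k$ together with a unifying homomorphism $h$ with $\mathrm{Im}(h)\in z$ keeps everything inside one conjugate of a factor, by the Key Lemma applied to $\mathrm{Im}(h)$ and to the $H'_k$. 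Hence a convergent sequence cannot mix the $A$-part and the $B$-part, and restricted to either part the limit-point condition in $\mathcal X_{A*B}$ is exactly the limit-point condition in $\mathcal X_A$ or $\mathcal X_B$. This yields the homeomorphism and finishes the argument; as noted, the one delicate point throughout is the control of conjugating elements, and it is entirely packaged into the Key Lemma, whose proof rests on the behaviour of homomorphisms out of $X^{\sigma}_{n<\omega}\mathbb Z_n$ into free products established in [1].
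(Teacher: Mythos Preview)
The paper does not give its own proof of this statement: Theorem~2.4 is quoted as a preliminary from Conner--Eda [1, Corollary~4.7], and the surrounding text says explicitly ``we refer to [1] for the proof and further details.'' So there is no in-paper argument to compare your attempt against; what you have produced is in effect a reconstruction of the proof in~[1].

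As such a reconstruction it is on target. The Key Lemma you isolate---that an uncountable image of $X^{\sigma}_{n<\omega}\mathbb Z_n$ in $A*B$ lies in a unique conjugate of a free factor---is precisely the structural input Conner and Eda extract from their analysis of homomorphisms into free products of groups in $\mathcal S$, and your Bass--Serre/normal-form outline for it is a reasonable way to organise that analysis. The one loose end you flag but do not close, the ``residual conjugation ambiguity'' in the surjectivity step, has a short resolution you could add: since $X^{\sigma}_{n<\omega}\mathbb Z_n \cong \mathbb Z * X^{\sigma}_{n<\omega}\mathbb Z_n$, for any $K\in\mathcal H_G$ and any $g\in G$ the subgroup $\langle K,g\rangle$ is again a member of $\mathcal H_G$; consequently every maximal compatible family is stable under conjugation by elements of $G$. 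Thus a point $z\in\mathcal X_{A*B}$ with an uncountable member inside $gAg^{-1}$ also contains its $g^{-1}$-conjugate, which lies in $A$ itself, and your uniqueness argument then forces all of $z$ into $\mathcal H_A$ and places $z$ in the image of $\mathcal X_A$. With that observation inserted, your outline is complete and matches the strategy of~[1].
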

\begin{rem} First we note that
the class $\mathcal{S}$ of groups is closed under free product.
For a free product $G=\prod^{*}_{i\in I}G_i$ of groups $G_i$ in
$\mathcal{S}$ and $g\in G$, we consider a sequence
$g=g_{i_1}g_{i_2}\cdots g_{i_n}$ with $g_{i_j}\in G_{i_j}$ and
their corresponding homomorphisms $h_i:G_i\rightarrow S_i$ with
$h_i(g_i)\neq e$. The homomorphism $h:\prod^{*}_{i\in I}G_i\rightarrow
\prod^{*}_{i\in J}S_i$ clearly satisfies $h(g)\neq e$, since
\[h(g_i)=\left \{\begin{array}{ll} h_i(g_i)\ & \ if\ i\in J\\
\ e & \ otherwise.
\end{array}\right. \]
Hence $G$ belongs to $\mathcal{S}$. Now, using Theorem 2.4, we
conclude that for any family $\{G_i\}_{i\in I}$ of groups in
$\mathcal{S}$ and any finite partition $\{J_1,\cdots,J_n\}$ of
the index set $I$, the space ${\mathcal{X}}_{\prod^{*}_{i\in I}G_i}$ is
the topological sum of the spaces ${\mathcal{X}}_{*_{i\in
J_1}G_i}$, \ldots, ${\mathcal{X}}_{\prod^{*}_{i\in J_n}G_i}$.
\end{rem}
\begin{cor} For any free
product $A=\prod^{*}_{i\in I}A_i$ of finitely many abelian groups which
belong to the class $\mathcal{S}$, the space ${\cal X}_G$ is
empty or a finite space with the discrete topology. In
particular, for any free group $F$ of finite rank, the space
${\cal X}_F$ is empty.
\end{cor}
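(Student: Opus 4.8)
The plan is to deduce the statement entirely from Remark 2.5 and Theorem 2.1. Write $A=A_1*A_2*\cdots*A_n$ (there are finitely many factors, so the index set $I=\{1,\dots,n\}$ is finite) and consider the partition of $I$ into the singletons $J_k=\{k\}$, $k=1,\dots,n$. Since each $A_i$ lies in $\mathcal{S}$, the hypotheses of Remark 2.5 are met, and it yields that $\mathcal{X}_A$ is the topological sum $\mathcal{X}_{A_1}\sqcup\cdots\sqcup\mathcal{X}_{A_n}$.

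Next I would apply Theorem 2.1 to each factor: as each $A_i$ is abelian, every $\mathcal{X}_{A_i}$ is empty or a one-point space. A topological (disjoint) sum of finitely many spaces, each of which is empty or a single point, therefore has at most $n$ points; moreover a one-point space is discrete and a topological sum of discrete spaces is discrete, so $\mathcal{X}_A$ carries the discrete topology. Hence $\mathcal{X}_A$ is empty (when all the $\mathcal{X}_{A_i}$ are empty) or a nonempty finite discrete space, which is the first assertion.

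For the ``in particular'' clause, let $F$ be a free group of finite rank $r$ and write $F\cong\mathbb{Z}*\cdots*\mathbb{Z}$ ($r$ copies; the case $r=0$, i.e.\ the trivial group, is immediate). Each factor $\mathbb{Z}$ is abelian, and $\mathbb{Z}\in\mathcal{S}$: indeed $\mathbb{Z}$ is itself $n$-slender, since for any homomorphism $h:X_{n<\omega}\mathbb{Z}_n\to\mathbb{Z}$ the set $\{n<\omega:\ h(\delta_n)\neq e\}$ is finite, so the identity homomorphism witnesses the condition defining $\mathcal{S}$ for every nontrivial $g\in\mathbb{Z}$. Thus the first part applies and $\mathcal{X}_F$ is the topological sum of $r$ copies of $\mathcal{X}_{\mathbb{Z}}$; but $\mathcal{X}_{\mathbb{Z}}=\emptyset$ by Theorem 2.1, so $\mathcal{X}_F=\emptyset$.

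This is essentially a formal corollary of the results already recorded, so I do not expect a genuine obstacle; the only points needing (routine) verification are that $\mathbb{Z}\in\mathcal{S}$, which rests on the classical $n$-slenderness of $\mathbb{Z}$, and the elementary fact that a finite topological sum of singletons is a finite discrete space.
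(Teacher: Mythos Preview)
Your argument is correct and is exactly the intended deduction: the corollary is stated in the paper without proof, as an immediate consequence of Remark~2.5 (splitting $\mathcal{X}_A$ as the topological sum over the singleton partition) together with Theorem~2.1 applied to each abelian factor, and the special case $\mathcal{X}_{\mathbb{Z}}=\emptyset$ for the free-group clause. Your additional verification that $\mathbb{Z}\in\mathcal{S}$ and the remark on finite sums of singletons being discrete are the only details one could ask for, and you supply them.
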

\section{Pseudo Peano Continuum Spaces}
In foregoing, we are dealing with pseudo Peano continua
introduced in section one.
\begin{note} For any pseudo Peano
continuum space X with $|X^w|\geq 2$, the fundamental group
$\pi(X)$ is not abelian. Indeed, using Theorem 2.1, we know that
for any abelian group $A$, ${\cal X}_A$ (and so the wild subspace
of a pseudo Peano continuum space whose fundamental group
 is isomorphic to $A$) is empty or
one-point which implies the result.
\end{note}
Now, we are interested in analyzing the wild subspaces of joins
and weak joins of pseudo Peano continuum spaces, which are
important in our categorical arguments.
\begin{lem} Let $\{(X_i,x_i)\}_{i\in I}$ be a finite family of pointed topological
spaces which are pseudo Peano continuum. Then the join of these
spaces, $X$ say, will be also a pseudo Peano continuum space.
\end{lem}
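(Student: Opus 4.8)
The plan is to verify, one at a time, the four defining properties of a pseudo Peano continuum --- path connectedness, local path connectedness, metrizability, and topological dimension one --- for the finite wedge $X=\bigvee_{i\in I}(X_i,x_i)$, leaving dimension for last as it is the only step that needs genuine work. Path connectedness is immediate: every point of $X$ lies in some $X_i$ and can be joined to the wedge point $x$ by a path inside $X_i$ (since $X_i$ is path connected), so any two points are joined by concatenating such paths. For local path connectedness, at a point of $X_i\setminus\{x_i\}$ the space $X$ looks locally exactly like $X_i$, because $X_i\setminus\{x_i\}$ is open in $X$, so the property is inherited there; at $x$ a basic neighbourhood has the form $\bigvee_{i\in I}U_i$ with $U_i$ a neighbourhood of $x_i$ in $X_i$, and using local path connectedness of each $X_i$ I can shrink the $U_i$ so that each is path connected, whence $\bigvee_{i\in I}U_i$ is path connected since all the $U_i$ contain $x$.

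For metrizability I would exhibit an explicit metric. Fixing a metric $d_i$ on each $X_i$, set $d(a,b)=d_i(a,b)$ when $a,b$ lie in the same $X_i$, and $d(a,b)=d_i(a,x_i)+d_j(x_j,b)$ when $a\in X_i$ and $b\in X_j$ with $i\neq j$. A short case analysis, according to how the three points occurring in the triangle inequality distribute among the $X_i$, shows $d$ is a metric. To see that $d$ induces the wedge topology, note that away from $x$ the metric $d$ agrees locally with the relevant $d_i$, while at $x$ one has $B_d(x,\varepsilon)=\bigvee_{i\in I}B_{d_i}(x_i,\varepsilon)$, a basic wedge neighbourhood; conversely, given a basic neighbourhood $\bigvee_{i\in I}U_i$ of $x$ one can choose a \emph{single} $\varepsilon>0$ with $B_{d_i}(x_i,\varepsilon)\subseteq U_i$ for every $i$ --- this is exactly the place where finiteness of $I$ is essential --- so that $B_d(x,\varepsilon)\subseteq\bigvee_{i\in I}U_i$. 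Hence the metric topology coincides with the wedge topology.

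The last property is the main obstacle, and the plan is as follows. Each inclusion $X_i\hookrightarrow X$ is a closed topological embedding: its image is closed in $X$, and one checks easily that the subspace topology on $X_i$ is its original topology (both facts using that $I$ is finite). Thus $X$ is the union of the finitely many closed subspaces $X_i$, each of topological dimension $\le 1$. Since $X$ has just been shown to be metrizable, hence normal, the finite closed sum theorem for covering dimension [5] gives $\dim X\le 1$; and $\dim X\ge \dim X_i=1$ because $X_i$ is a closed subspace of the normal space $X$. Therefore $\dim X=1$ and $X$ is a pseudo Peano continuum. The delicate point here is precisely that one must guarantee that collapsing the basepoints together does not raise the dimension at the wedge point, and the only clean way I see to do this is to invoke the sum theorem --- which is why the metrizability step must be carried out first.
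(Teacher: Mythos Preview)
Your proof is correct and, for path connectedness, local path connectedness, and metrizability, parallels the paper's argument almost verbatim: the paper defines exactly the same metric $d$ on the wedge. You are in fact more careful than the paper here, since you verify that $d$ induces the wedge topology and isolate the step where finiteness of $I$ is used; the paper simply writes down $d$ and checks the metric axioms.

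The genuine difference is in the dimension step. The paper argues directly: given an open cover $\{U_\lambda\}$ of $X$, restrict to each $X_i$, refine there using $\dim X_i\le 1$, and take the union $\bigcup_i\tilde{\mathcal U}_i$ as the desired refinement. You instead observe that each $X_i$ sits in $X$ as a closed subspace and invoke the finite closed sum theorem for covering dimension in the (now known to be) metrizable, hence normal, space $X$. Your route is cleaner and sidesteps an issue the paper glosses over, namely that the members of $\tilde{\mathcal U}_i$ are open only in $X_i$, not in $X$, so one still has to swell them to open sets of $X$ while controlling the order at the wedge point; the sum theorem packages precisely this work. The paper's approach, on the other hand, is more self-contained in that it does not appeal to an external dimension-theoretic result.
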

\begin{proof} Let $x_0\in X$ be the common point
of all $X_i$, so the path connectivity of all $X_i$ implies the
existence of paths from any point $x_0\neq x\in X$, belonging to a
unique space $X_{i_x}$, to the base point $x_0$. Hence the path
connectivity of the space $X$ is satisfied.

The  locally path connectivity of the space $X$ at any point
$x_0\neq x\in X$ is a direct result of the definition of join
structure and locally path-connectivity of the $X_i$. In fact, for
any neighborhood $N_{x_0}$ of the special point $x_0\in X$, for
any $i\in I$, the neighborhood $(U_{x_0})_i=N_{x_0}\cap X_i$
should be open in the corresponding space $X_i$. Hence, by the
locally path connectivity of any $X_i$, there exists the
path connected neighborhood $(V_{x_0})_i\subseteq (U_{x_0})_i$. So
the neighborhood $V=\cup_{i\in I} (V_{x_0})_i$ of $x_0$ as a
subspace of $N_{x_0}$ is our desirable neighborhood.

To show the metrizablity of the space $X$, firstly we consider the
corresponding metric $d_i$ to $X_i$, for any $i\in I$. Now using
these metrics, we define the following metric on the whole space
$X$, so that for any two points $x, y\in X$ which belong to $X_i$
and $X_j$, respectively, we have

\[d(x,y)=\left \{\begin{array}{ll} d_i(x,y)\ & {\mathrm if}\ i=j\\
 d_i(x,x_0)+ d_j(x_0,y)\ \  & \ \ \ \ {\mathrm otherwise}.
\end{array}\right. \]

We note that the well-definition of the metric $d$ is concluded by
the condition of being mutually disjoint for all $X_i$ in the
join space $X$. The others metric properties of $d$ are easily
deduced by consideration of $d_i$'s to be metrics.

Finally, the join space $X$ is obviously one-dimensional. It is
sufficient to note that the space $X$ as a disjoint union of
one-dimensional spaces is also one-dimensional. Indeed, for any
covering $\{U_{\lambda}\}_{\lambda\in \Lambda}$ of $X$ and any
$i\in I$, we consider ${\cal U}_i=\{U_{\lambda}\cap
X_i\}_{\lambda\in \Lambda}$ which is a cover for the
one-dimensional space $X_i$ and so it has a suitable refinement
$\tilde {\cal U}_i$, say. Hence the union $\cup_{i\in I}\tilde
{\cal U}_i$ is a cover of $X$ satisfying the condition of being
one-dimensional for the join space $X$.
\end{proof}
\begin{lem} Let
$\{(X_i,x_i)\}_{i\in I}$ be a countable family of pointed
topological spaces which are pseudo Peano continuum. Then the
weak join of these spaces, $(\tilde{X},x_0)$ say, will be also a
pseudo Peano continuum space.
\end{lem}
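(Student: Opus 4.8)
The plan is to check, one at a time, the four properties defining a pseudo Peano continuum for $(\tilde X,x_0)$, reusing the observation that off the distinguished point the weak‑join topology coincides with the topology of the relevant factor (so those parts are inherited from the $X_i$ exactly as in the preceding lemma), and supplying genuinely new arguments only at $x_0$. Path connectedness is immediate: any $x\neq x_0$ lies in a unique path connected $X_{i_x}$, hence is joined by a path to $x_i=x_0$, and concatenation connects arbitrary points. For local path connectedness, away from $x_0$ the factor topology gives it for free; at $x_0$ a basic neighborhood has the form $N=\tilde{\bigvee}_{i}U_i$ with $U_i$ a neighborhood of $x_i$ in $X_i$ and $U_i=X_i$ for all but finitely many $i$, so one picks (by local path connectedness of $X_i$) a path connected open $V_i$ with $x_i\in V_i\subseteq U_i$ for the exceptional indices and sets $V_i=X_i$ otherwise; then $\tilde{\bigvee}_{i}V_i$ is again a basic neighborhood of $x_0$, is contained in $N$, and is path connected since every point of $V_i$ joins $x_0$ inside $V_i$.

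Metrizability is the crux. Enumerate the family as $\{(X_n,x_n)\}_{n<\omega}$ with metrics $d_n$, and replace $d_n$ by the topologically equivalent bounded metric $d_n'=\min\{d_n,2^{-n}\}$ (so $\mathrm{diam}_{d_n'}(X_n)\le 2^{-n}$; the triangle inequality for a truncated metric is routine). On the underlying set of $\tilde X$ define
\[
d(x,y)=\begin{cases} d_n'(x,y) & x,y\in X_n,\\[2pt] d_n'(x,x_0)+d_m'(x_0,y) & x\in X_n,\ y\in X_m,\ n\neq m,\end{cases}
\]
which, the $X_n$ being mutually disjoint off $x_0$, is a metric by a case check. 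The claim is that $d$ induces precisely the weak‑join topology: near $x\neq x_0$, any $d$‑ball of radius below $d_n'(x,x_0)$ lies in $X_n$ and is $X_n$‑open, so the local topology is that of $X_n$; near $x_0$, because $\mathrm{diam}_{d_n'}(X_n)\le 2^{-n}\to 0$, the ball $B_d(x_0,\varepsilon)$ contains $X_n$ entirely whenever $2^{-n}<\varepsilon$ and meets each remaining factor in $B_{d_n'}(x_0,\varepsilon)$ — exactly the shape of a basic weak‑join neighborhood; conversely, given $\tilde{\bigvee}_{n}U_n$, choosing $\varepsilon$ small enough that $B_{d_n'}(x_0,\varepsilon)\subseteq U_n$ for the finitely many $n$ with $U_n\neq X_n$ gives $B_d(x_0,\varepsilon)\subseteq\tilde{\bigvee}_{n}U_n$. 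Hence $d$ metrizes $\tilde X$.

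For one‑dimensionality I would deliberately avoid imitating the refinement argument of the preceding lemma, which breaks here: restricting an open cover of $\tilde X$ to the factors and reuniting the refinements forces $x_0$ into infinitely many members. Instead: each $X_n$ is closed in $\tilde X$ (its complement $\bigcup_{m\neq n}(X_m\setminus\{x_0\})$ is open), $\tilde X=\bigcup_{n<\omega}X_n$, and $\tilde X$ is normal since it is metrizable; so the countable closed sum theorem for covering dimension yields $\dim\tilde X\le\sup_n\dim X_n=1$, while $\dim\tilde X\ge\dim X_0=1$, so $\tilde X$ is one‑dimensional. Combined with the previous two paragraphs this shows $(\tilde X,x_0)$ is a pseudo Peano continuum. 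The main obstacle throughout is the behavior at $x_0$: engineering a metric whose small balls at $x_0$ realize the "cofinitely many whole factors" neighborhoods, and replacing the naive dimension‑by‑refinement argument with the countable sum theorem.
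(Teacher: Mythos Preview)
Your argument is correct, and it departs from the paper's in two places. For metrizability the paper simply reuses the gluing metric of the preceding lemma and asserts it still works; you instead truncate each $d_n$ to $d_n'=\min\{d_n,2^{-n}\}$ before gluing and then verify that the resulting balls at $x_0$ are precisely the basic weak-join neighborhoods. Your version is the more careful one: without forcing the diameters to tend to zero, the gluing metric induces the ordinary join topology rather than the weak-join topology, so the truncation is exactly what makes the metrizability claim true. For one-dimensionality the paper does \emph{not} refine on all factors at once (the pitfall you flag); rather it first selects a single cover element $U_{\lambda_0}\ni x_0$, which by the weak-join definition swallows all but finitely many $X_n$, and then runs the refinement argument only on those finitely many remaining factors together with $U_{\lambda_0}$. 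Your appeal to the countable closed sum theorem is cleaner and sidesteps the question of whether refined sets open in $X_n$ remain open in $\tilde X$, at the price of quoting a nontrivial dimension-theoretic result; the paper's finiteness reduction is more elementary in spirit but needs additional care to be made fully rigorous.
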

\begin{proof} Firstly, we recall that the
topology of two spaces, join and weak join of all $X_i$, are
exactly similar except at the base point $x_0$. In order to prove
the details, it is sufficient to investigate at the special point
$x_0$. Also we note that the path and the metric introduced in
the proof of the lemma 3.2, with the similar argument, are those
which show the path connectivity and the metrizability of the
weak join space $\tilde{X}$.

For the locally path connectivity of $\tilde{X}$ we consider the
similar neighborhood of $x_0$ and note that there exists $i_0$ so
that the neighborhood $N_{x_0}$ contains the whole space $X_j$,
for any $j\geq i_0$.

The one-dimensionality of $\tilde{X}$ is clearly satisfied. For
any covering $\{U_{\lambda}\}_{\lambda\in \Lambda}$ of
$\tilde{X}$, we consider the neighborhood $U_{\lambda_0}$ of
$x_0$ which, by the definition, contains all but a finite number
of $X_i$'s, $X_1, X_2, \cdots X_n$ say. Now we can consider
$\tilde{X}$ as the quotient space of the disjoint union
$(U_{\lambda_0}\cap \tilde{X})\cup X_1\cup \cdots \cup X_n$ that
for any $i$, $1\leq i\leq n$, ${\cal U}_i=\{U_{\lambda}\cap
X_i\}_{\lambda\in \Lambda}$ is a cover for the one-dimensional
space $X_i$ and so it has a suitable refinement $\tilde {\cal
U}_i$. Finally the union $(U_{\lambda_0}\cap \tilde{X})\cup\tilde
{\cal U}_1\cup \cdots \cup \tilde {\cal U}_n$ is a cover of
$\tilde{X}$ which obviously satisfied the condition of being
one-dimensional for the weak join space $\tilde{X}$ and this
completes the proof.
\end{proof}
At the end of this section, we summarize some important results of
[2] in our new language which assert precisely that for any two
wild pseudo Peano continuum spaces X and Y, the following
conditions are equivalent:\\
$(i)$ X , Y are homeomorphic spaces.\\
$(ii)$ X , Y have the same homotopy type.\\
$(iii)$ The fundamental groups of spaces X , Y are isomorphic.
\section{A Categorical Viewpoint}
In this section, we are ready to mention a categorical viewpoint
for the main concept of the paper i.e. pseudo Peano continuum
space. Specially we try to get some functorial properties of
fundamental groups which do not hold in general, but can be
satisfied in the particular category which we are going to define.

\hspace{-0.65cm}\textbf{{\bf The Category $\mathbf{Top^w}$;} }
which is a subcategory of $\mathbf{Top}_*$, consist of all spaces
which are wild pseudo Peano continuum and of all morphisms between
them.

\hspace{-0.65cm}\textbf{{\bf The Category $\mathbf{\pi(Top^w)}$;}}
which is a subcategory of the category $\mathbf{Group}$, consists
of all fundamental groups of wild pseudo Peano continuum spaces
and of all homomorphisms induced by the map between them.

By considering a congruence relation $\sim$ on the set of all
morphisms in the category $\mathbf{Top^w}$ such that $f\sim g$ if
and only if the induced homomorphism $\pi(f)=f_*$ and
$\pi(g)=g_*$ are equal in the category $\mathbf{Group}$, we can
construct the quotient category $\mathbf{qTop^w}$ and it tends to
define two functors $\bar{\pi}$ and ${\cal X}_{-}$ as follows. As
some famous objects of the category $\mathbf{qTop^w}$, we name
the spaces Menger sponge, Sierpinski gasket, Sierpinski carpet.

Note that for our purpose, we need to consider some suitable
subcategories of $\mathbf{qTop^w}$ as follows:

By a suitable subcategory of $\mathbf{qTop^w}$, denoted by
$\mathbf{sqTop^w}$, we mean all spaces in $\mathbf{qTop^w}$ with a
specific point which we correspond to each object with the
following manner.

Let $X$ be a wild pseudo Peano continuum space and $G=\pi(X)$ be
its fundamental group. By Theorem 2.2 there exists a
homeomorphism $\varphi:{\cal X}_{G}\rightarrow X$. Choose any
fixed point $*_G$ in ${\cal X}_{G}$ and consider $\varphi(*_G)$ as
the specific corresponding point for the space $X$. Now the
pointed space $({\cal X}_{G}, *_G)$ is one of the objects of the
suitable subcategory $\mathbf{sqTop^w}$. Note that for any
pointed space $(X,x_0)$ in $\mathbf{qTop^w}$, we can always
choose a suitable fixed point $*_{\pi(X)}$ in ${\cal X}_{\pi(X)}$
such that $\varphi(*_{\pi(X)})=x_0$.

\hspace{-0.65cm}\textbf{{\bf The functor $\mathbf{\bar{\pi}}$;}}
from a suitable subcategory of $\mathbf{qTop^w}$,
$\mathbf{sqTop^w}$ to $\mathbf{\pi({Top^w})}$ such that for any
space $X$ and any morphism $[f]$ in the category
$\mathbf{sqTop^w}$, defines $\bar{\pi}(X)=\pi(X)$ and
$\bar{\pi}([f])=f_*$.

\hspace{-0.65cm}\textbf{{\bf The functor $\mathbf{\cal X}_{-}$;}}
from the category $\pi(\mathbf{{Top^w}})$ to a category
$\mathbf{sqTop^w}$ such that for any group $G$ and any morphism
$f_*$ in $\pi(\mathbf{Top^w})$, defines ${\cal X}_{-}(G)= ({\cal
X}_{G},*_G)$ and ${\cal X}_{-}(f_*)={\cal X}_{f_*}=[f]$.

Now with respect to the above notation, we state the following
result.
\begin{thm} The pair of
functors ($\bar{\pi}$ , ${\cal X}_{-}$) is an adjoint pair.
\end{thm}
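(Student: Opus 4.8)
The plan is to establish the required natural equivalence
\[
{\mathrm Hom}_{\pi(\mathbf{Top^w})}(\bar{\pi}(X),G)\simeq {\mathrm Hom}_{\mathbf{sqTop^w}}(X,{\cal X}_{-}(G))
\]
by constructing explicit bijections in both directions and then checking naturality. First I would unwind the definitions: an object of $\mathbf{sqTop^w}$ is, up to the congruence $\sim$, a pointed space $({\cal X}_{\pi(X)},*_{\pi(X)})$, and $\bar{\pi}$ sends it to $\pi(X)$; a morphism $[f]$ in $\mathbf{sqTop^w}$ is determined precisely by the induced homomorphism $f_*$, which is the whole point of passing to the quotient category $\mathbf{qTop^w}$. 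So the claim essentially says: homomorphisms $\pi(X)\to G$ correspond bijectively (and naturally) to homotopy-type--detected maps $X\to {\cal X}_G$. Using Theorem 2.2 we may freely replace $X$ by ${\cal X}_{\pi(X)}$, so the statement reduces to showing that ${\cal X}_{-}$ is well-defined as a functor and that the assignment $f_*\mapsto {\cal X}_{f_*}=[f]$ is inverse to $[f]\mapsto \bar{\pi}([f])=f_*$.

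The key steps, in order, are: (1) Verify ${\cal X}_{-}$ is a functor, i.e. that a homomorphism $\phi:G\to G'$ in $\pi(\mathbf{Top^w})$ induces a well-defined continuous pointed map ${\cal X}_\phi:({\cal X}_G,*_G)\to ({\cal X}_{G'},*_{G'})$ respecting composition and identities; this is where one must use the topological description of ${\cal X}_G$ via limit points and the preorder $\preceq$, checking that $\phi$ carries maximal compatible subfamilies containing an uncountable subgroup to such subfamilies and preserves limit points (the condition involving homomorphisms $h:X^\sigma_{n<\omega}{\mathbb Z}_n\to G$ pushes forward along $\phi$). (2) Define the unit/counit or directly the two maps of the adjunction: given $\phi:\bar{\pi}(X)=\pi(X)\to G$, send it to the morphism $[g]\in{\mathrm Hom}_{\mathbf{sqTop^w}}(X,{\cal X}_G)$ where, identifying $X$ with ${\cal X}_{\pi(X)}$, we take $g={\cal X}_\phi$; conversely, given $[g]:X\to {\cal X}_G$, apply $\bar{\pi}$ to get $g_*:\pi(X)\to \pi({\cal X}_G)$, then postcompose with the canonical isomorphism $\pi({\cal X}_G)\cong G$ coming from Theorem 2.2 (since ${\cal X}_G$ is the wild pseudo Peano continuum realizing $G$). (3) Check these are mutually inverse: the composite $\phi\mapsto {\cal X}_\phi\mapsto ({\cal X}_\phi)_* $ identified back through $\pi({\cal X}_G)\cong G$ returns $\phi$, because the homeomorphism of Theorem 2.2 is compatible with the functorial structure — essentially ${\cal X}_{(-)}$ and $\pi$ are mutually inverse on the relevant objects and morphisms by the Conner--Eda reconstruction. (4) Check naturality in both variables $X$ and $G$ by a routine diagram chase, using that all maps in sight are built from $\pi$, from ${\cal X}_{-}$, and from the natural homeomorphisms of Theorem 2.2.

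**The hard part will be** Step (1), establishing that ${\cal X}_{-}$ is genuinely functorial — i.e. that an arbitrary group homomorphism $\phi:G\to G'$ (between groups that arise as fundamental groups of wild pseudo Peano continua) induces a continuous map ${\cal X}_\phi$ on the spaces of maximal compatible subfamilies, well-defined independently of the choices of base points $*_G,*_{G'}$. The subtlety is exactly the non-categorical phenomenon flagged by Conner and Eda in [1, Remark 2.8(2)]: a homomorphism need not send an uncountable-subgroup--containing maximal compatible subfamily to one, nor preserve the limit-point structure, without extra hypotheses. I would confront this by working entirely inside $\pi(\mathbf{Top^w})$, where every morphism $f_*$ already comes from an actual continuous map $f$ of spaces, so that ${\cal X}_{f_*}$ can be defined topologically as (the class of) $f$ itself transported through Theorem 2.2 — this is why the definition of $\mathbf{\pi(Top^w)}$ restricts to homomorphisms \emph{induced by maps}, and why passing to the quotient category $\mathbf{qTop^w}$ (so that $f$ is determined by $f_*$) is essential. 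Once functoriality of ${\cal X}_{-}$ is secured on these restricted morphism classes, the bijection and naturality in Steps (2)--(4) are formal, following from the fact that $\bar{\pi}$ and ${\cal X}_{-}$ are, on objects, inverse equivalences witnessed by the homeomorphisms $\varphi:{\cal X}_{\pi(X)}\to X$ of Theorem 2.2.
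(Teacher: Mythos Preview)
Your proposal is correct and follows essentially the same approach as the paper: both define the bijection $\theta(f_*)=[f]$ using that morphisms in $\pi(\mathbf{Top^w})$ are by construction of the form $f_*$ for some continuous $f$, deduce bijectivity from the fact that $\bar{\pi}$ and ${\cal X}_{-}$ are mutually inverse on objects (via Theorem~2.2), and verify naturality by a routine diagram chase. The ``hard part'' you flag is in fact dissolved by the paper's definitions rather than confronted head-on: since ${\cal X}_{-}(f_*)$ is \emph{defined} to be $[f]$ (well-defined precisely because of the congruence $\sim$ on $\mathbf{qTop^w}$ and the restriction on morphisms in $\pi(\mathbf{Top^w})$), no direct analysis of how homomorphisms act on maximal compatible subfamilies is required.
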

\begin{proof} First, we note that the functors
$\bar{\pi}$ and ${\cal X}_{-}$ are inverse to each other. Indeed,
using Theorem 2.2, for any $(X,x_0)\in {\mathbf{sqTop^w}}$, the
space ${\cal X}_{\pi(X)}$ is homeomorphic to the wild subspace of
$X$ and so by definition of $\mathbf{sqTop^w}$, the pointed space
${\cal X}_{-}\circ\bar{\pi}(X)=({\cal X}_{\pi(X)},*_{\pi(X)})$ is
equivalent to the space $(X,x_0)$ itself. Conversely, for any
$G\in \pi(\mathbf{Top^w})$, the claim $\bar{\pi}\circ{\cal
X}_{-}(G)\cong G$ is clearly satisfied.

Finally, for every space $(X,x_0)\in \mathbf{sqTop^w}$ and every group
$G=\pi({\cal X}_{G},*_G)\in \pi(\mathbf{Top^w})$, we define
$$\theta(=\theta_{X,G}): {\mathrm Hom}_{\pi(\mathbf{Top^w})}(\bar{\pi}(X),G)
\rightarrow {\mathrm Hom}_{\mathbf{sqTop^w}}((X,x_0),
({\cal X}_{G},*_G))$$
mapping $f_*$ to ${\cal X}_{f_*}=[f]$, where $f$ is a morphism in
${\mathrm Hom}_{\mathbf{Top^w}}((X,x_0),({\cal X}_{G},*_G))$.
Firstly $\theta$ is well-defined because of the well-definition of
${\cal X}_{-}$, also it is one to one and onto due to the fact
that $\bar{\pi}$ and ${\cal X}_{-}$ are inverse to each other.
Also the map $\theta$, as a bijection, is natural in each
variable; that is the following diagrams commute:
$$\begin{array}{ccccccc}
{\mathrm Hom}_{\pi({\mathbf{Top^w}})}(\bar{\pi}(X_1),G_1)
&\stackrel{(\bar{\pi}(f))^*}{\longrightarrow}
& {\mathrm Hom}_{\pi({\mathbf{Top^w}})}(\bar{\pi}(X_2),G_1)\\
\downarrow \theta & & \downarrow \theta\\
{\mathrm Hom}_{\mathbf{sqTop^w}}(X_1,{\cal X}_{G_1})
&\stackrel{(f)^*}{\longrightarrow}& {\mathrm
Hom}_{\mathbf{sqTop^w}}(X_2,{\cal X}_{G_1})
\end{array}$$
and
$$\begin{array}{ccccccc}
{\mathrm Hom}_{\pi({\mathbf{Top^w}})}(\bar{\pi}(X_1),G_1)
&\stackrel{(g_*)_*}{\longrightarrow}
& {\mathrm Hom}_{\pi({\mathbf{Top^w}})}(\bar{\pi}(X_1),G_2)\\
\downarrow \theta & & \downarrow \theta\\
{\mathrm Hom}_{\mathbf{sqTop^w}}(X_1,{\cal X}_{G_1})
&\stackrel{({\cal X}_{g_*})_*}{\longrightarrow}& {\mathrm
Hom}_{\mathbf{sqTop^w}}(X_1,{\cal X}_{G_2})
\end{array}$$
for all $f:X_2\rightarrow X_1$ in ${\mathbf{sqTop^w}}$ and
$g_*:G_1\rightarrow G_2$ in $\pi(\mathbf{Top^w})$. This note ends
the proof.
\end{proof}
The above theorem may be the most essential fact in this paper so
that the existence of the right adjoint for a functor can imply
some known manner such as preserving direct limits, if any, and
keeping the right exactness of an exact sequence, in a certain
sense. We end with the following result.
\begin{thm} For every direct
system $\{(X_i,x_i),i\in I\}$ of spaces in a subcategory
$\mathbf{sqTop^w}$ whose direct limit belongs to the category,
we have the isomorphism
$\pi(\displaystyle{\lim_{\rightarrow}X_i)}\cong\displaystyle{\lim_{\rightarrow}\pi(X_i)}$.
\end{thm}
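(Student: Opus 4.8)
The plan is to obtain this as an immediate consequence of Theorem 4.1 together with the general categorical principle, recalled in Section 2 and attributed to [7], that any functor possessing a right adjoint preserves direct limits. Since Theorem 4.1 asserts that $(\bar{\pi},{\cal X}_{-})$ is an adjoint pair with $\bar{\pi}\colon \mathbf{sqTop^w}\to \pi(\mathbf{Top^w})$ playing the role of the left adjoint $T_1$, the functor $\bar{\pi}$ commutes with direct limits whenever they exist in the relevant categories.

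Concretely, I would argue as follows. Let $\{(X_i,x_i);\lambda_i^j,I\}$ be the given direct system in $\mathbf{sqTop^w}$, and suppose, as in the hypothesis, that its direct limit $(X,x_0)=\lim_{\rightarrow}(X_i,x_i)$ is again an object of $\mathbf{sqTop^w}$; this is exactly what guarantees that the colimit is computed inside the subcategory rather than merely in some larger ambient category. Applying Theorem 4.1 and the preservation statement of [7], we obtain
\[
\bar{\pi}\bigl(\lim_{\rightarrow}X_i\bigr)\;\cong\;\lim_{\rightarrow}\bar{\pi}(X_i)
\]
as objects of $\pi(\mathbf{Top^w})$, the isomorphism being the canonical comparison morphism. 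By the very definition of $\bar{\pi}$ on objects we have $\bar{\pi}(Y)=\pi(Y)$ for every object $Y$, so the left-hand side equals $\pi(\lim_{\rightarrow}X_i)$ and the right-hand side equals $\lim_{\rightarrow}\pi(X_i)$, which is the desired isomorphism.

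It is worth spelling out the two points where care is needed. First, the colimit on the right must be interpreted in $\pi(\mathbf{Top^w})$; one should remark that, since the objects and morphisms of $\pi(\mathbf{Top^w})$ are fundamental groups and induced homomorphisms of wild pseudo Peano continua, a colimit that exists there agrees with the colimit of the underlying diagram of groups, so that $\lim_{\rightarrow}\pi(X_i)$ carries its usual meaning. Second — and this is really the only substantive point — one must ensure that the hypothesis ``whose direct limit belongs to the category'' is genuinely used: it is precisely the condition that allows the adjoint-functor argument to be carried out inside $\mathbf{sqTop^w}$.

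A cleaner alternative route, which I would also mention, is to observe that the proof of Theorem 4.1 exhibits $\bar{\pi}$ and ${\cal X}_{-}$ as mutually inverse, so $\bar{\pi}$ is an equivalence of categories; an equivalence preserves (and reflects) all colimits, giving the isomorphism directly without any appeal to the general adjoint-functor lemma. I expect no serious obstacle: essentially all the content is already packaged in Theorem 4.1, and the remaining work is limited to checking that the direct limits on the two sides are being formed in the correct categories.
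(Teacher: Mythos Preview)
Your proposal is correct and follows essentially the same approach as the paper: invoke Theorem 4.1 to see that $\bar{\pi}$ has a right adjoint ${\cal X}_{-}$, then apply the general principle from [7] that left adjoints preserve direct limits. The paper's own proof is in fact just this one-line argument, without your additional elaboration on where the colimits live or the alternative equivalence-of-categories route.
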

\begin{proof} The functor $\bar{\pi}$ which has
a right adjoint ${\cal X}_{-}$, should preserve the direct limit
[7], that is the above isomorphism and so the result holds.
\end{proof}
\section{A Theorem of Van Kampen Type for Join Spaces}
In this section, we are in a position to present one of the main
result of the paper which is a theorem of Van Kampen type theorem
for wild pseudo Peano continuum spaces. Note that the well-known
Van Kampen Theorem asserts that the fundamental group of a join
of first countable semilocally simply connected spaces is
isomorphic to the free product of their fundamental groups [4].
However, the conditions presented in the following theorem are
completely different from (even though opposite to) the
conditions in Van Kampen Theorem.
\begin{lem} The join of any
finite family of spaces in $\mathbf{sqTop^w}$ is also in this
category.
\end{lem}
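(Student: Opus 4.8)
The plan is to reduce the statement to two things: that the underlying space of the join is a wild pseudo Peano continuum, and that it carries the distinguished point required by the definition of $\mathbf{sqTop^w}$. Let $\{(X_i,x_i)\}_{i\in I}$ be a finite family in $\mathbf{sqTop^w}$, so each $X_i$ is a wild pseudo Peano continuum, and let $(X,x_0)=\bigvee_{i\in I}(X_i,x_i)$ be its join, with $x_0$ the common wedge point. By Lemma 3.2, $X$ is a pseudo Peano continuum. Moreover, once we know $X=X^w$, Theorem 2.2 gives a homeomorphism $\varphi:{\cal X}_{\pi(X)}\to X$, and by the remark following the definition of $\mathbf{sqTop^w}$ we may choose the fixed point $*_{\pi(X)}$ so that $\varphi(*_{\pi(X)})=x_0$; then $(X,x_0)\cong({\cal X}_{\pi(X)},*_{\pi(X)})$ is an object of $\mathbf{sqTop^w}$. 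So the whole content is the wildness of $X$.

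The key tool is a retraction. For each $i\in I$ define $r_i:X\to X_i$ by $r_i|_{X_i}=\mathrm{id}_{X_i}$ and $r_i(X_j)=\{x_0\}$ for $j\neq i$. Since the family is finite, a subset of $X$ is open exactly when it meets each $X_j$ in an open set, and a direct check shows $r_i$ is continuous; hence the inclusion $\iota_i:X_i\hookrightarrow X$ has the left inverse $r_i$ and is therefore injective on fundamental groups. I will use this to propagate wildness from the $X_i$ to $X$.

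Now I would verify $X=X^w$ in two cases. If $x\in X$, $x\neq x_0$, then $x$ lies in a unique $X_i\setminus\{x_i\}$, and sufficiently small neighborhoods of $x$ in $X$ coincide with neighborhoods of $x$ in $X_i$. If $X$ were semilocally simply connected at $x$, witnessed by a neighborhood $U\subseteq X_i$ with $\pi(U,x)\to\pi(X,x)$ trivial, then composing with $r_i$ would make $\pi(U,x)\to\pi(X_i,x)$ trivial, contradicting that $X_i$ is wild at $x$. If $x=x_0$, let $U$ be any neighborhood of $x_0$ in $X$; then $U\cap X_i$ is a neighborhood of $x_i$ in $X_i$, and since $X_i$ is wild at $x_i$ there is a loop $\gamma$ in $U\cap X_i$ with $[\gamma]\neq e$ in $\pi(X_i,x_i)$. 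By the injectivity of $(\iota_i)_*$, the class of $\gamma$ is nontrivial in $\pi(X,x_0)$, so the homomorphism $\pi(U,x_0)\to\pi(X,x_0)$ is nontrivial; as $U$ was arbitrary, $x_0$ is a wild point of $X$. Hence $X=X^w$.

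Combining these, $X$ is a wild pseudo Peano continuum, and with the basepoint $x_0$ it is, up to the identification of Theorem 2.2, an object of $\mathbf{sqTop^w}$, as required. I expect the wedge-point case to be the only delicate step: near $x_0$ the topology of the join genuinely mixes all the summands, so one cannot simply localize inside a single $X_i$, and the argument really needs the section $\iota_i$ to certify that a loop essential in some summand stays essential in the join; the rest is bookkeeping with Lemma 3.2.
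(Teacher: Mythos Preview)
Your proof is correct and follows essentially the same approach as the paper: both invoke Lemma~3.2 for the pseudo Peano continuum property and then use the collapsing retraction $X\to X_i$ to show that a hypothetical non-wild point of $X$ would force some $X_i$ to be non-wild. The paper argues by contradiction uniformly without distinguishing $x=x_0$ from $x\neq x_0$, whereas you make the case split explicit and phrase the wedge-point case via the injectivity of $(\iota_i)_*$ rather than triviality of a composite; this is a presentational, not a mathematical, difference.
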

\begin{proof} Let $\{X_i\}_{i\in I}$ be a finite
family of spaces in $\mathbf{sqTop^w}$ and
$(X,x_0)=\bigvee(X_i,x_i)$ be the join space of this family.
First, we recall the definition of $\mathbf{sqTop^w}$ and the
note after it, also Lemma 3.2 which asserts that the join space
of pseudo Peano continuum spaces is also pseudo Peano continuum.
So to complete the proof, it is sufficient to prove the space $X$
is also wild. By the contrary, suppose there exists a point $x\in
X$ and a neighborhood $U_x$ of it so that the homomorphism
$\pi(j):\pi(U_x)\rightarrow \pi(X)$, induced by inclusion, is
trivial. Now if we composite the inclusion map $l_i:U_x\cap
X_i\rightarrow X_i$ and the collapsing map $p_i:X\rightarrow
X_i$, we obtained the inclusion $p_i\circ j\circ l_i: U_x\cap
X_i\rightarrow X_i$ which induces the trivial homomorphism
$\pi(p_i\circ j\circ l_i)=\pi(p_i)\pi(j)\pi(l_i):\pi(U_x\cap
X_i)\rightarrow \pi(X_i)$ which contradicts to supposition of
$X_i$ to be wild space.
\end{proof}
\begin{thm} (fundamental groups
 of join spaces in ${\mathbf Top^w}$). The
 fundamental group of the join of a finite family of wild pseudo Peano
continuum spaces is isomorphic to the coproduct of their
fundamental groups in $\pi(\mathbf{Top^w})$.
\end{thm}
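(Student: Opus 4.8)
The natural plan is to obtain the theorem formally from the adjunction of Theorem~4.1, using that a functor with a right adjoint preserves colimits. For a finite index set $I$ the join $(X,x_0)=\bigvee_{i\in I}(X_i,x_i)$ is precisely the coproduct of the pointed spaces $(X_i,x_i)$ in $\mathbf{Top}_*$, equivalently the direct limit of the discrete system $\{(X_i,x_i)\mid i\in I\}$ (one of the ``special systems'' of Section~2). By Lemma~5.1 this space is again a wild pseudo Peano continuum, and choosing, as in the definition of $\mathbf{sqTop^w}$, a base point $*_{\pi(X)}\in\mathcal{X}_{\pi(X)}$ with $\varphi(*_{\pi(X)})$ the wedge point $x_0$ (possible since $x_0$ is wild) makes it an object of $\mathbf{sqTop^w}$, with structure morphisms $[\iota_i]\colon X_i\to X$ induced by the inclusions. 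Granting that this is a coproduct \emph{in} $\mathbf{sqTop^w}$ (the point of the next paragraph), Theorem~4.2 applied to the discrete direct system $\{X_i\}_{i\in I}$ --- whose direct limit $\bigvee_{i\in I}X_i$ lies in the category by Lemma~5.1 --- yields
\[
\pi\Bigl(\bigvee_{i\in I}X_i\Bigr)=\bar{\pi}\Bigl(\coprod_{i\in I}X_i\Bigr)\cong\coprod_{i\in I}\bar{\pi}(X_i)=\coprod_{i\in I}\pi(X_i)
\]
in $\pi(\mathbf{Top^w})$, with the induced inclusions as structure maps, which is the assertion.

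So the real work is to check that the topological join, manifestly the coproduct in $\mathbf{Top}_*$, remains a coproduct after passing to the quotient category $\mathbf{sqTop^w}$, whose morphisms are congruence classes $[f]$, with $f\sim g$ exactly when $f_*=g_*$. Existence of a comparison morphism is routine: given $Y\in\mathbf{sqTop^w}$ and morphisms $[f_i]\colon X_i\to Y$, pick representatives $f_i$ in $\mathbf{Top}_*$, glue them over the wedge to get $f\colon X\to Y$, and take $[f]$, so that $[f]\circ[\iota_i]=[f_i]$. The delicate points are that $[f]$ is independent of the representatives and that it is the \emph{unique} morphism with $[f]\circ[\iota_i]=[f_i]$ for all $i$; both reduce to showing that two morphisms $X\to Y$ of $\mathbf{sqTop^w}$ agreeing after precomposition with every $[\iota_i]$ coincide, i.e.\ (unwinding the congruence) that a homomorphism out of $\pi\bigl(\bigvee_{i\in I}X_i\bigr)$ is determined by its restrictions along the $(\iota_i)_*\colon\pi(X_i)\to\pi\bigl(\bigvee_{i\in I}X_i\bigr)$.

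This is the main obstacle, and it is not formal: a loop at the wedge point decomposes, along the closed set $\gamma^{-1}(x_0)$, into possibly infinitely many loops each lying in a single $X_i$, so one must control infinite concatenations rather than finite words, and $\pi\bigl(\bigvee_{i\in I}X_i\bigr)$ need not literally be generated by the images of the $\pi(X_i)$. I would attack this using one-dimensionality together with the explicit description of fundamental groups of one-dimensional spaces in the style of Eda, or else sidestep it by verifying the universal property of the coproduct directly inside $\pi(\mathbf{Top^w})$ through the adjunction, rewriting $\mathrm{Hom}_{\pi(\mathbf{Top^w})}\bigl(\pi(\bigvee_{i\in I}X_i),G\bigr)\cong\mathrm{Hom}_{\mathbf{sqTop^w}}\bigl(\bigvee_{i\in I}X_i,\mathcal{X}_G\bigr)$ and analysing the continuous maps into $\mathcal{X}_G$. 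Finally I would stress, as a caveat, that in contrast with the classical Van Kampen theorem the coproduct here is in general \emph{not} the free product $\prod^{*}_{i\in I}\pi(X_i)$ --- the join is only a quotient of the topological sum $\bigsqcup_{i\in I}X_i$ --- consistent with the remark opening Section~5 that the present hypotheses are opposite to the classical ones.
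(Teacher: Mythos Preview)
Your approach is essentially the same as the paper's: view the finite join as a coproduct/direct limit, invoke Lemma~5.1 to see that it lies in $\mathbf{sqTop^w}$, and then apply Theorem~4.2 (preservation of direct limits by the left adjoint $\bar{\pi}$) to conclude that $\pi\bigl(\bigvee_{i\in I}X_i\bigr)$ is the coproduct of the $\pi(X_i)$ in $\pi(\mathbf{Top^w})$. Your first paragraph is, almost verbatim, the paper's entire proof.

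Where you diverge is in your second and third paragraphs. You correctly observe that applying Theorem~4.2 presupposes that the topological wedge, which is certainly the coproduct in $\mathbf{Top}_*$, remains the coproduct in the quotient category $\mathbf{sqTop^w}$, and you isolate the nontrivial content of that claim: uniqueness of the induced morphism amounts to showing that a homomorphism out of $\pi\bigl(\bigvee_{i\in I}X_i\bigr)$ (into a $\pi(\mathbf{Top^w})$-object) is determined by its restrictions along the $(\iota_i)_*$, which is not automatic because loops at the wedge point may decompose into infinitely many pieces. The paper's proof simply does not address this; it cites Lemma~5.1 and Theorem~4.2 and declares the result, tacitly treating the $\mathbf{Top}_*$-coproduct as the $\mathbf{sqTop^w}$-coproduct. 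So you have not taken a different route so much as articulated a gap the paper leaves implicit, and sketched two reasonable strategies (Eda's word calculus for one-dimensional spaces, or pushing the universal property through the adjunction to analyse maps into $\mathcal{X}_G$) for closing it. Your closing caveat that the coproduct here is generally not the free product is also a point the paper does not make explicit.
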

\begin{proof} Firstly, we recall that the join
space $(X,x)=\bigvee(X_i,x_i)$ as coproduct of the finite family
$\{X_i,i\in I\}$ is the direct limit of a special direct system
explained in [7,8]. Now, by considering a suitable subcategory
$\mathbf{sqTop^w}$ contains all the pointed space $(X_i,x_i)$ and
using Lemma 5.1 and Theorem 4.2, the fundamental group of
$(X,x)=\bigvee(X_i,x_i)$ must be the direct limit of the induced
corresponding direct system, which plays the role of the coproduct
of $\{\pi(X_i)\}_{i\in I}$ in the category
$\pi($${\mathbf{Top^w}})$. Since $\bigvee(X_i,x_i)$ is a wild
pseudo Peano continuum space and
$\pi(\bigvee(X_i,x_i))\cong\displaystyle{\lim_{\rightarrow}\pi(X_i,x_i)}$,
the above coproduct in the category $\pi($${\mathbf{Top^w}})$ does exist.
\end{proof}
As we see, the results of this section are deduced essentially by
using the functorial property of the fundamental group on the
special category which we constructed. Now attending to the
functor ${\cal X}_{-}$ as a right adjoint to $\bar{\pi}$, we will
get a result which is indeed another proof for Theorem 2.3 [1,
Corollary 5.2]. Details are offered in the next section.
\section{Fundamental Groups of Quotient Spaces}
In this final section we are going to find a relation between
fundamental groups of a quotient space $X/A$, the space $X$ and
its subspace $A$, where $X/A$, $X$, and $A$ belong to the
category ${\mathbf{Top^w}}$. In order to do this, we need to
consider cokernels as direct limits in $\mathbf{sqTop^w}$, but
this category has no zero object. In order to compensate this
default we consider the following two categories:

\hspace{-0.65cm}\textbf{{\bf The Category $\mathbf{sqTop_0^w}$;}
} is obtained by $\mathbf{sqTop^w}$ by adding the point $\{*\}$
and all morphisms, the constant map $0^X:(X,x_0)\rightarrow
\{*\}$, and the map $0_Y:\{*\}\rightarrow (Y,y_0)$ taking $*$ to
the point $y_0$, to the category $\mathbf{sqTop^w}$.

\hspace{-0.65cm}\textbf{{\bf The Category
$\mathbf{\pi(Top_0^w)}$;}} is obtained by $\mathbf{\pi(Top_0^w)}$
by adding the trivial group $\{e\}$ and all morphisms, the
trivial homomorphism $0^{\pi(X)}:\pi(X,x_0)\rightarrow \{e\}$ and
the homomorphism $0_{\pi(Y)}:\{e\}\rightarrow \pi(Y,y_0)$ mapping
$e$ to the identity $e$, to the category $\pi(\mathbf{Top^w})$.

Moreover, we should consider two functors
$\bar{\pi}_0:\mathbf{sqTop_0^w}\rightarrow \pi(\mathbf{Top_0^w})$
and ${\cal X}_{0 -}:\pi(\mathbf{Top_0^w})\rightarrow
\mathbf{sqTop_0^w}$ which are natural extensions of $\bar{\pi}$
and ${\cal X}_{-}$, and correspond added objects $\{*\}$ and
$\{e\}$ to each other, such as:
$$\bar{\pi}_0: \{*\}\longmapsto \{e\}\ \ \ \ {\mathrm and}
\ \ \ {\cal X}_{0 -}:\{e\}\longmapsto \{*\}.$$

Similar to Theorem 5.1, one can easily verify that
$(\bar{\pi}_0,{\cal X}_{0 -})$ is an adjoint pair, and hence
$\bar{\pi}_0$ preserves direct limits in $\mathbf{sqTop_0^w}$, and
${\cal X}_{0 -}$ preserves inverse limits in
$\pi(\mathbf{Top_0^w})$.

Now, consider a pointed space $(X,x_0)$ and a subspace $A$
containing $x_0$ such that $(X,x_0)$, $(A,x_0)$, and $(X/A,*)$
belong to $\mathbf{sqTop_0^w}$. Note that we can choose a suitable
fixed point $*_{\pi(X)}$ in ${\cal X}_{\pi(X)}$ and $*_{\pi(A)}$
in ${\cal X}_{\pi(A)}$ such that $\varphi(*_{\pi(X)})=x_0$ and
$\psi (*_{\pi(A)})=x_0$, where $\psi:{\cal X}_{\pi(A)}\rightarrow
A$ is the homeomorphism.

We can consider the quotient space $(X/A,*)$ as a pushout of the
following diagram:

$$\begin{array}{ccccccc}
(A,x_0) &\stackrel{j}{\longrightarrow}
& (X,x_0)\\
\downarrow 0^A & & \downarrow p\\
\{*\} &\stackrel{nat}{\longrightarrow}& (X/A,*),
\end{array}$$
where $j$ and $p$ are inclusion and quotient maps, respectively.
The map $nat$ also naturally corresponds $*$ to the point $*$.

Therefore the pointed space $(X/A,*)$ can be considered as a
direct limit in $\mathbf{sqTop_0^w}$, and hence $\pi(X/A,*)$ is
the pushout of the following diagram:
$$\begin{array}{ccccccc}
\pi(A,x_0) &\stackrel{j_*}{\longrightarrow}
& \pi(X,x_0)\\
\downarrow 0^{\pi(A)} & & \downarrow p_*\\
\{e\} &\stackrel{nat_*}{\longrightarrow}& \pi(X/A,*),
\end{array}$$
where $j_*$ and $p_*$ are the induced map $\pi(j)$ and $\pi(p)$,
respectively. The homomorphism $nat_*$ is also mapping $e$ to the
identity of $\pi(X/A,*)$. Also, we know that
$\pi(X,x_0)/{j_*(\pi(A,x_0))}^{\pi(X,x_0)}$ is the
pushout of the above diagram in the category of groups. Hence
there exists a unique homomorphism
$$\psi:\frac{\pi(X,x_0)}{{j_*(\pi(A,x_0))}^{\pi(X,x_0)}}\rightarrow
\pi(X/A,*)$$
such that the following diagram commutes:
\[\begin{picture}(240,70)
\put(48,55){$\pi(X,x_0)$} \put(152,60){$\pi(X/A,*)$}
\put(1,1){$\pi(X,x_0)/{j_*(\pi(A,x_0))}^{\pi(X,x_0)}.$}
\put(98,59){\vector(1,0){45}} \put(140,8){\vector(1,1){45}}
\put(75,51){\vector(0,-1){25}} \put(117,62){\small $p_*$}
\put(145,26){\small $\psi$} \put(57,35){\small $nat$}.
\end{picture}\]


\begin{thebibliography}{8}

\bibitem{1} G. Conner and K. Eda, Fundamental groups having the whole
information of spaces, {\it Topology Appl.}, 146-147 (2005)
317-328.
\bibitem{2} K. Eda, The fundamental groups of one-dimensional
spaces and spatial homomorphisms, {\it Topology Appl.}, 123 (2002)
479-505.
\bibitem{3} K. Eda, Free $\sigma$-products and noncommutatively
slender groups, {\it J. Algebra}, 148 (1992) 243-263.
\bibitem{4} J.W. Morgan, I. A. Morrison, A van Kampen theorem for
weak joins, {\it Proc. London Math. Soc.}, 53 (1986) 562-576.
\bibitem{5} J.R. Munkres, {\it Topology: A First Course},
Prentice-Hall, Englewood Cliffs, New Jersey, 1975.
\bibitem{6} J.J. Rotman, {\it An Introduction to Algebraic Topology}, GTM 119,
Springer-Verlag, New York, 1988.
\bibitem{7} J.J. Rotman, {\it An Introduction to Homological
Algebra}, Academic Press, New York 1979.
\bibitem{8} E.H. Spanier, {\it Algebraic Topology}, McGraw-Hill, New York, 1966.

\end{thebibliography}
\end{document}